\newcommand{\cev}[1]{\reflectbox{\ensuremath{\vec{\reflectbox{\ensuremath{#1}}}}}}
\newtheorem{theorem}{Theorem} 
\newtheorem{alphtheorem}{Theorem}
\newtheorem{alphlemma}{Lemma}
\newtheorem{lemma}{Lemma}
\theoremstyle{definition}
\newtheorem{definition}{Definition}
\theoremstyle{remark}
\def\isdef{\mbox {$\ \stackrel{\rm def}{=} \ $}}
\def\zero{\boldsymbol{0}}
\def\ds{\displaystyle}
\def\KG{\operatorname{KG}}
\def\SG{\operatorname{SG}}
\def\alt{\operatorname{alt}}
\def\salt{\operatorname{salt}}
\def\HH{\mathcal{H}}
\def\mod{\operatorname{mod}}
\title{ Circular chromatic number of induced subgraphs of  Kneser graphs}
\author{Meysam Alishahi}
\address{M. Alishahi, 
School of Mathematical Sciences,
Shahrood University of Technology, Shahrood, Iran}
\email{meysam\_alishahi@shahroodut.ac.ir}
\author{Ali Taherkhani}
\address{A. Taherkhani, 
Department of Mathematics, Institute for Advanced Studies in Basic Sciences (IASBS), Zanjan 45137-66731, Iran}
\email{ali.taherkhani@iasbs.ac.ir}
\begin{document}
\maketitle

\begin{abstract}
\noindent 
Investigating the equality of the  chromatic number and 
the circular chromatic number of graphs has been an active stream of research for last decades. 
In this regard, Hajiabolhassan and Zhu~[{\it Circular chromatic number of {K}neser graphs, 
Journal of Combinatorial Theory Series B}, 2003] proved that if $n$ is sufficiently large with respect to 
  $k$, then the Schrijver graph ${\rm SG}(n,k)$ 
has the same chromatic and circular chromatic number.  Later, Meunier~[{\it A topological lower bound for the circular chromatic number of
  {S}chrijver graphs, Journal of Graph Theory}, 2005] and independently, 
   Simonyi and Tardos~[{\it Local chromatic number, {K}y {F}an's theorem and circular colorings, Combinatorica}, 2006] 
 proved that $\chi({\rm SG}(n,k))=\chi_c({\rm SG}(n,k))$ if  $n$ is even. 
In this paper, we study the circular chromatic number of induced subgraphs of Kneser 
graphs. In this regard,  we shall first generalize the preceding result to $s$-stable Kneser 
graphs. Furthermore, as a generalization of  Hajiabolhassan and Zhu's result, we prove 
that if $n$ is large enough with respect to $k$, then any sufficiently large induced subgraph 
of the Kneser graph $\KG(n,k)$ has the same chromatic number and circular chromatic number. 
\\

\noindent {\bf Keywords:}\ {chromatic number, Kneser graph, circular chromatic number.}\\
{\bf Subject classification: 05C15}
\end{abstract}

\section{\bf Introduction}
Throughout the paper, the symbol $[n]$ stands for the set $\{1,\ldots, n\}$.
Let $n$ and $d$ be two positive integers. The {\it circular complete graph $K_{n\over d}$}   
is a graph with the vertex set $[n]$ and two vertices $i$ and $j$ are adjacent 
whenever $d\leq |i-j|\leq n-d$. For a graph $G$, the circular chromatic number of $G$, 
denoted $\chi_c(G)$, is defined as follows:
$$\chi_c(G)\isdef\inf\left\{{n\over d}\;:\; \mbox{there is a homomorphism from $G$ to $K_{n\over d}$}\right\}.$$
It is known that the infimum can be replaced by minimum.
Moreover, one can see that $\chi(G)-1<\chi_c(G)\leq\chi(G)$, see~\cite{MR1815614}. 
Therefore, the circular chromatic number is a refinement of the chromatic number.
It is a natural question to ask for which graphs $G$, we have $\chi_c(G)=\chi(G)$.
However, it turns out to be a difficult question.  
Pavol Hell~\cite{HELL199092} Proved that    the problem of determining whether a graph has the circular chromatic number at most $\frac{n}{d} $ is NP-Hard.  Hatami and Tusserkani~\cite{HaTu2004} showed that the problem of determining whether or not $\chi_c(-)=\chi(-)$ is  NP-Hard
even if the chromatic number is known. Therefore, 
finding necessary conditions for graphs to have the same chromatic and circular 
chromatic number turns out to be an interesting problem. This problem has received a 
significant  attention, for instance see~\cite{MR2601263,MR1983360,MR1815614,MR2249284}.

For two positive integers $n$ and $k$, where $n\geq 2k$, the {\it  Kneser graph $\KG(n,k)$} 
is a graph with the vertex set ${[n]\choose k}$, that is, the family of all $k$-subsets of $[n]$,  
and two vertices are adjacent if their corresponding $k$-subsets are disjoint. 
Kneser 1955~\cite{kneser} conjectured that  the chromatic number of $\KG(n,k)$ is
$n-2k+2$. In 1978, Lov{\'a}sz~\cite{MR514625} gave an affirmative answer to Kneser's conjecture.  
He used algebraic topological tools, giving birth to the field of topological combinatorics. 
For a positive integer $s$, a nonempty subset ${\mathcal S}$ of $[n]$ is said to be $s$-stable
if  for any two different elements  $i$ and $j$ in $S$, we have $s\leq |i-j|\leq n-s$. 
Throughout the paper, 
the family of all $s$-stable $k$-subsets of $[n]$ is denoted by ${[n]\choose k}_s$. 
The  subgraph of $\KG(n,k)$ induced by all $s$-stable $k$-subsets of $[n]$ is called 
the {\it  $s$-stable Kneser graph} and is denoted by $\KG_s(n,k)$.
The $2$-stable Kneser graph $\KG_2(n,k)$ is known as the Schrijver graph $\SG(n,k)$. 
Schrijver~\cite{MR512648} proved that Schrijver graphs are vertex critical subgraphs of Kneser graphs with the same chromatic number. 
Meunier~\cite{MR2793613} showed that for any two positive integers 
$n$ and $k$, where $n\geq sk$, the $s$-stable Kneser graph $\KG_s(n,k)$ can be colored by
$n-s(k-1)$ colors and conjectured that the chromatic number is $n-s(k-1)$. 
Jonsson~\cite{jonsson} proved that this conjecture is true provided that $s\geq 4$ and $n$ 
is sufficiently large with respect to  $k$ and $s$. Also, Chen~\cite{Chen-2015} confirmed the 
Meunier's conjecture for even values of $s$.

Lov{\'a}sz's theorem~\cite{MR514625} has been generalized in several aspects. 
For a hypergraph $\HH$, 
the {\it  general Kneser graph $\KG(\HH)$} is a graph with the vertex set $E(\HH)$ and two 
vertices are adjacent if their corresponding edges are vertex disjoint. 
Dol'nikov~\cite{MR953021} generalized Lov{\'a}sz's result and proved that the chromatic 
number of $\KG(\HH)$ is at least the {\it  colorability defect of $\HH$}, denoted ${\rm cd}(\HH)$, 
where the colorability defect of $\HH$ is the minimum number of vertices should be excluded 
from $\HH$ so that the induced subhypergraph on the remaining vertices is $2$-colorable.

For a vector $X=(x_1,\ldots,x_n)\in\{-,0.+\}^n$, a sequence 
$x_{i_1},x_{i_2},\ldots,x_{i_t}$ ($i_1<\cdots<i_t$) is called an {\it alternating subsequence of $X$ with length $t$} if $x_{i_j}\neq 0$ for each $j\in\{1,\ldots,t\}$
and $x_{i_j}\neq x_{i_{j+1}}$ for each $j\in\{1,\ldots,t-1\}$. The maximum length of an 
alternating subsequence of $X$ is called the {\it  alternation number of $X$}, denoted $\alt(X)$. 
For $\zero\isdef(0,\ldots,0)$, we define $\alt(\zero)\isdef 0$. Aslo, we define $X^+$ and $X^-$ 
to be respectively the indices  of positive and negative coordinates of $X$, i.e.,
$$X^+\isdef\{i\;:\; x_i=+\}\quad\mbox{and}\quad X^-\isdef\{i\;:\; x_i=-\}.$$
Note that both $X^+$ and $X^-$ are subsets of $[n]$ and  by abuse of notation, we can write $X=(X^+,X^-)$. 
For two vectors $X,Y\in \{-,0,+\}^n$, by $X\subseteq Y$, we mean 
$X^+\subseteq Y^+$ and $X^-\subseteq Y^-$. 

Let $\HH=(V,E)$ be a hypergraph and   $\sigma:[n]\longrightarrow V(\HH)$ be a bijection.  
The {\it  alternation number of $\HH$ with respect to $\sigma$}, denoted $\alt_\sigma(\HH)$, 
is the maximum possible value of 
an $\alt(X)$ such that $E(\HH[\sigma(X^+)])=E(\HH[\sigma(X^-)])=\varnothing$. Also, 
the {\it  strong alternation number of $\HH$ with respect to $\sigma$}, 
denoted $\salt_\sigma(\HH)$,  is the maximum possible value of 
an $\alt(X)$ such that $E(\HH[\sigma(X^+)])=\varnothing$ 
or $E(\HH[\sigma(X^-)])=\varnothing$. 
The {\it  alternation number of $\HH$}, denoted $\alt(\HH)$, and the {\it  strong 
alternation number of $\HH$}, denoted $\salt(\HH)$, are respectively the minimum 
values of $\alt_\sigma(\HH)$ and $\salt_\sigma(\HH)$, where the minimum is taken 
over all bijections $\sigma:[n]\longrightarrow V(\HH)$.
The first present author and Hajiabolhassan~\cite{alihajijctb2015} proved 
the following theorem. 
\begin{alphtheorem}\label{alihaji}
For any hypergraph
$\HH$, we have
$$\chi(\KG(\HH))\geq \max\left\{|V(\HH)|-\alt(\HH), |V(\HH)|-\salt(\HH)+1\right\}. $$
\end{alphtheorem}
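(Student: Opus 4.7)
The plan is to translate a proper coloring of $\KG(\HH)$ into an antipodal labeling of $\{-,0,+\}^n \setminus \{\zero\}$, where $n = |V(\HH)|$, and then apply Tucker's lemma. Write $c = \chi(\KG(\HH))$, fix a proper coloring $f : E(\HH) \to [c]$, and let $\sigma : [n] \to V(\HH)$ be a bijection attaining the minimum in the definition of $\alt(\HH)$ (respectively $\salt(\HH)$).

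For the first bound $\chi(\KG(\HH)) \geq |V(\HH)| - \alt(\HH)$, I would build an antipodal map $\lambda : \{-,0,+\}^n \setminus \{\zero\} \to \{\pm 1, \ldots, \pm(\alt(\HH)+c)\}$ that splits into two regimes. When both $E(\HH[\sigma(X^+)])$ and $E(\HH[\sigma(X^-)])$ are empty, the choice of $\sigma$ forces $\alt(X) \leq \alt(\HH)$, and I set $\lambda(X) = \varepsilon \cdot \alt(X)$ with $\varepsilon$ the sign of the first nonzero coordinate of $X$. Otherwise at least one induced subhypergraph carries an edge; by a fixed tiebreaking rule (say, prefer $X^+$ when both sides have edges), I pick an edge $e$ on the chosen side and set $\lambda(X) = \pm(\alt(\HH) + f(e))$, the sign matching that side. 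Antipodality is immediate from $(-X)^\pm = X^\mp$. The crux is checking that no pair $X \subseteq Y$ has $\lambda(X) = -\lambda(Y)$: in the ``two edge labels'' case the chosen edges lie in $\sigma(Y^+)$ and $\sigma(Y^-)$, have the same $f$-color, and are disjoint, hence adjacent in $\KG(\HH)$, contradicting properness; the ``two alt labels'' case is blocked by monotonicity of $\alt(\cdot)$ together with the first-nonzero convention; mixed cases reduce to these. Tucker's lemma then yields $\alt(\HH) + c \geq n$.

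For the sharper bound $\chi(\KG(\HH)) \geq |V(\HH)| - \salt(\HH) + 1$, I would exploit the extra structure built into $\salt$: whenever $\alt(X) > \salt(\HH)$, both $E(\HH[\sigma(X^+)])$ and $E(\HH[\sigma(X^-)])$ are nonempty. This lets me pin the sign of $\lambda(X)$ to the first-nonzero-coordinate sign even in the edge regime, while still picking an edge on the matching side. The two regimes now fuse into a single antipodal labeling whose range compresses to $\{\pm 1, \ldots, \pm(\salt(\HH)+c-1)\}$, and Tucker delivers $\salt(\HH) + c - 1 \geq n$.

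The step I expect to require the most care is the joint design of the tiebreaking and sign rule: it must be antipodal, compatible with the partial order $X \subseteq Y$, and consistent at the boundary between the alt labels and the edge labels. In the $\salt$ sharpening this compatibility has essentially no slack, so verifying that every potential complementary pair is genuinely blocked (rather than merely shifted to a neighboring case) is the main technical obstacle I foresee.
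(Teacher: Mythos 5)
Your overall architecture---a two-regime antipodal labeling fed into Tucker's lemma, with $\pm\alt(X)$ below the threshold and color-based labels above it---is exactly the paper's (the paper derives Theorem~\ref{alihaji} from the stronger Theorem~\ref{lower}, whose proof replaces your single edge color by $\alt(c(X))$, the alternation number of the pair of color sets appearing on the two sides). However, your concrete rule in the edge regime has a genuine defect that you gesture at but do not resolve: the tiebreak ``prefer $X^+$ when both sides have edges'' destroys antipodality. If both $E(\HH[\sigma(X^+)])$ and $E(\HH[\sigma(X^-)])$ are nonempty, the same holds for $-X$, so your rule assigns a \emph{positive} label to both $X$ and $-X$. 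The missing idea is to label by the \emph{maximum} color $m$ of an edge contained in $\sigma(X^+)$ or $\sigma(X^-)$ and to take the sign of the side containing that edge: properness of the coloring guarantees $m$ cannot occur on both sides (two such edges would be disjoint, hence adjacent in $\KG(\HH)$, yet equally colored), so the sign is well defined and antipodal, and the no-complementary-edge check for $X\subseteq Y$ goes through by the same disjoint-equal-color contradiction you already invoke.

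The second gap is in the $\salt$ bound: the claimed compression of the range to $\{\pm1,\dots,\pm(\salt(\HH)+c-1)\}$ is asserted but not earned. With $\lambda(X)=\pm(\salt(\HH)+f(e))$ for a single edge $e$ you only obtain $\salt(\HH)+c\ge n$, which is the weaker inequality; and naively subtracting $1$ lets the edge regime emit the value $\salt(\HH)$ already used by the alt regime, creating potential complementary edges at the boundary. The point you need is that when $\alt(X)\ge\salt(\HH)+1$ \emph{both} sides carry edges, and edges on opposite sides are disjoint and therefore receive distinct colors, so the maximum color satisfies $m\ge2$; hence $\lambda(X)=\pm(\salt(\HH)+m-1)$ lies in $\{\pm(\salt(\HH)+1),\dots,\pm(\salt(\HH)+c-1)\}$, disjoint from the alt-regime labels, and Tucker's lemma gives $\salt(\HH)+c-1\ge n$. (The paper achieves the same effect because $\alt(c(X))\ge2$ whenever both color sets are nonempty.) With these two repairs your argument is correct and essentially coincides with the paper's, minus the extra colorful-bipartite-subgraph information that the $\alt(c(X))$ labeling is designed to extract.
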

One can simply see that
this result improves the aforementioned 
Dol'nikov's result~\cite{MR953021}. 
Using this lower bound, the chromatic number of several families of 
graphs are computed, for instance see~\cite{2014arXiv1401.0138A,2014arXiv1403.4404A,2014arXiv1407.8035A,2015arXiv150708456A,AliHajiMeu2016}.

In 1997, Johnson, Holroyd, and Stahl~\cite{MR1475894} proved that 
$\chi_c(\KG(n,k))=\chi(\KG(n,k))$ provided that $2k+1\leq n\leq2k+2$ or $k=2$ . 
They also conjectured that the circular chromatic number of  Kneser graphs is always equal to their chromatic number. 
This conjecture has been studying in several articles. Hajiabolhassan and 
Zhu~\cite{MR1983360} 
using a combinatorial method proved that  if $n$ is large enough with respect to $k$,  
then $\chi_c(\KG(n,k))=\chi(\KG(n,k))$. Later, using algebraic topology,  
Meunier~\cite{MR2197228} 
and Simonyi and Tardos~\cite{MR2279672} independently confirmed this conjecture 
for the case of  even $n$. It should be mentioned that the results 
by Hajiabolhassan and 
Zhu~\cite{MR1983360}, Meunier~\cite{MR2197228}, and Simonyi and 
Tardos~\cite{MR2279672} are indeed proved for the Schrijver graph $\SG(n,k)$. Eventually in 2011, Chen~\cite{MR2763055} confirmed the 
Johnson-Holroyd-Stahl conjecture. Chen's proof was simplified in~\cite{MR2971704}. 
The first present author, Hajiabolhassan, and Meunier~\cite{AliHajiMeu2016} generalized 
Chen's result to a larger family of graphs. They introduced a sufficient 
conditions for a hypergraph $\HH$ having $\chi(\KG(\HH))=\chi_c(\KG(\HH))$.

\subsection*{Plan}
The paper contains two main sections. In Section~\ref{stable}, we shall investigate the coloring properties of stable Kneser graphs. In this regard, we prove the equality of the  chromatic number and the circular chromatic number of $s$-stable Kneser graph $\KG_s(n,k)$ provided that 
$n\geq(s+2)k-2$ and both $n$ and $s$ are even. In the last section, 
we study the circular chromatic number of large 
induced subgraphs of Kneser graphs. Indeed, it is proved that, for large enough $n$, 
any sufficiently large induced subgraph of the Kneser graph $\KG(n,k)$ has the same 
chromatic number and circular chromatic number. In particular, giving a partial answer to a 
question posed by Lih and Liu~\cite{Ko-We-2002}, we present a 
threshold $n(k,s)$ such that for any $n\geq n(k,s)$,
the chromatic number and circular chromatic number of $\KG_s(n,k)$ are equal. 

\section{Chromatic Number of Stable Kneser Graphs}\label{stable}
As it is mentioned in the previous section, the chromatic number of $s$-stable Kneser graph
$\KG_s(n,k)$ is determined provided that $k$ and $s$ are fixed and $n$ is large enough~\cite{jonsson} 
or $s$ is even~\cite{Chen-2015}.
In this section, we first present a generalization of Theorem~\ref{alihaji}. 
Using this generalization, for any even $s$, we prove that any proper coloring of $s$-stable 
Kneser graph $\KG_s(n,k)$ contains a large colorful complete bipartite subgraph, which
immediately implies the chromatic number of $s$-stable Kneser graph $\KG_s(n,k)$. 
Also, this result concludes that the circular chromatic number of  $s$-stable Kneser graph
$\KG_s(n,k)$ equals to their chromatic number provided that $n\geq(s+2)k-2$ and both $n$ and $s$ are even.

Tucker's lemma is a combinatorial counterpart of the
Borsuk-Ulam theorem with several useful applications, for instance, see
\cite{MR1988723,MR2057690}.

\begin{alphlemma}\label{Tucker}
{\rm(Tucker's lemma {\rm \cite{MR0020254} )}} Let 
$\lambda:\{-,0,+\}^n\setminus\{\zero\}\longrightarrow\{\pm1,\ldots,\pm m\}$ 
be a  map satisfying the following properties:
\begin{itemize}
\item it is antipodal: $\lambda(-X)=-\lambda(X)$ for each $X\in\{-,0,+\}^n\setminus\{\zero\}$, and
\item it has no complementary edges: there is no $X$ and $Y$ in $\{-,0,+\}^n\setminus\{\zero\}$ such that 
$X\subseteq Y$ and $\lambda(X)=-\lambda(Y)$. 
\end{itemize}
Then $m\geq n$. 
\end{alphlemma}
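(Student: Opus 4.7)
The plan is to deduce Tucker's lemma from the Borsuk--Ulam theorem. The first step is to set up the standard dictionary that identifies $\{-,0,+\}^n\setminus\{\zero\}$ with the set of nonempty proper faces of the boundary of the $n$-dimensional cross-polytope $C^n\isdef\{x\in\R^n:\sum_{i=1}^n|x_i|\le 1\}$: a sign vector $X=(X^+,X^-)$ corresponds to the face whose vertex set is $\{e_i:i\in X^+\}\cup\{-e_i:i\in X^-\}$, and under this correspondence, face inclusion coincides with the relation $X\subseteq Y$ introduced in the preamble. Consequently, the poset $(\{-,0,+\}^n\setminus\{\zero\},\subseteq)$ is the face poset of $\partial C^n$, whose order complex is the barycentric subdivision $\sd\partial C^n$; this is a triangulation of $S^{n-1}$ on which $X\mapsto -X$ induces a free simplicial $\Z_2$-action.

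I would then argue by contradiction. Suppose $m<n$, and define a vertex map
$$f\colon\sd\partial C^n\longrightarrow\partial C^m,\qquad f(X)\isdef\mathrm{sign}(\lambda(X))\cdot e_{|\lambda(X)|}.$$
The antipodal hypothesis on $\lambda$ immediately yields $f(-X)=-f(X)$. If one can show that $f$ extends to a simplicial map between these triangulations, then its geometric realization is a continuous antipodal map $S^{n-1}\to S^{m-1}$, which contradicts the Borsuk--Ulam theorem and forces $m\ge n$.

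The main---and essentially the only---substantive step is verifying that $f$ is simplicial, i.e., that images of chains span simplices of $\partial C^m$. For a chain $X_1\subsetneq\cdots\subsetneq X_r$ in $\sd\partial C^n$, the images $f(X_1),\ldots,f(X_r)$ span a simplex of $\partial C^m$ iff no two of them are antipodal in $\R^m$, which amounts to $\lambda(X_i)\neq -\lambda(X_j)$ for all $i<j$. But for $i<j$ one has $X_i\subseteq X_j$, so this is exactly the ``no complementary edges'' hypothesis on $\lambda$. Thus both conditions on $\lambda$ translate word-for-word into the two requirements (antipodality and the simplex-preserving property) for an antipodal simplicial map $\sd\partial C^n\to\partial C^m$, and Borsuk--Ulam closes the argument.
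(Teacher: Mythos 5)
Your argument is correct: the identification of $\{-,0,+\}^n\setminus\{\zero\}$ with the face poset of $\partial C^n$, the passage to the barycentric subdivision as an antipodal triangulation of $S^{n-1}$, and the verification that the two hypotheses on $\lambda$ translate exactly into antipodality and the simplex-preserving property of the vertex map into $\partial C^m$ are all the standard ingredients, and Borsuk--Ulam then does rule out $m<n$. The one point of comparison worth making is that the paper does not prove this lemma at all: it is quoted verbatim from Tucker with a citation, and is used as a black box in the proof of Theorem~\ref{lower}. Moreover, the paper explicitly leans on the fact that Tucker's lemma admits a \emph{purely combinatorial} proof (Matou\v{s}ek~\cite{MR2057690}), since the authors want to advertise their colorful $K_{\lfloor t/2\rfloor,\lceil t/2\rceil}$ theorem as having a combinatorial proof. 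Your derivation via Borsuk--Ulam, while perfectly valid and arguably the cleanest way to see why the lemma is true, is topological, so if it were substituted for the citation it would undercut that particular selling point; the combinatorial proof proceeds instead by a path-following/parity argument on the simplices of a suitable triangulation and never invokes continuity. Both routes establish the same statement; yours buys brevity and conceptual clarity at the cost of importing the Borsuk--Ulam theorem.
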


There are several results concerning the existence of a large complete bipartite 
multi-colored subgraph in any proper coloring of a graph $G$, 
see~\cite{alihajijctb2015,MR2763055,SiTaZs13,MR2279672,MR2351519}. 
In fact, for a graph $G$, it is shown that 
the order of such subgraph is bounded below by some 
invariants of some topological
spaces defined according to the structure of $G$. 
In what follows, we present a similar type result with a combinatorial proof. 
Note that since there is a purely combinatorial proof~\cite{MR2057690} for 
Tucker's lemma, any proof
using Tucker's lemma with combinatorial approach can be considered as a 
purely combinatorial proof.

Part of the following theorem is proved in~\cite{alihajijctb2015} by use of
Ky-Fan's lemma~\cite{MR0051506}, a generalization of Tucker's lemma. 
Here, we sketch the proof with a different approach and by use of Tucker's lemma instead 
of Ky-Fan's lemma.  
\begin{theorem}\label{lower}
Let $\HH$ be a hypergraph and set 
$t= \max\left\{|V(\HH)|-\alt(\HH), |V(\HH)|-\salt(\HH)+1\right\}$. For any proper coloring 
$c:V(\KG(\HH))\longrightarrow[C]$,  there exists a complete bipartite subgraph 
$K_{\left\lfloor{t\over 2}\right\rfloor,\left\lceil{t\over 2}\right\rceil}$ of $\KG(\HH)$ all vertices of 
which receive different colors and moreover, these different 
colors occur alternating on the two parts 
of the bipartite graph with respect to their natural order. 
\end{theorem}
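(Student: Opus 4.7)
The plan is as follows. My starting point is the Tucker-based proof of Theorem~\ref{alihaji} from~\cite{alihajijctb2015}. I would preserve the skeleton of that argument --- the construction of a Tucker labeling on $\{-,0,+\}^n\setminus\{\zero\}$ derived from the given proper coloring $c:V(\KG(\HH))\to[C]$ --- and add enough bookkeeping to read off the promised colorful alternating bipartite subgraph, which is the new content here.

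Write $n = |V(\HH)|$, set $a = \alt(\HH)$ and $a' = \salt(\HH)$, and fix a bijection $\sigma:[n]\to V(\HH)$ realizing the relevant quantity; the $\alt$- and $\salt$-cases (corresponding to the two terms in the definition of $t$) run in parallel with minor modifications, so for concreteness I focus on the former. For each nonzero $X=(X^+,X^-)$, let $\varepsilon(X)\in\{\pm 1\}$ be the sign of the last nonzero coordinate of $X$, and define the Tucker labeling by: if $\HH[\sigma(X^+)]$ and $\HH[\sigma(X^-)]$ are both edgeless, set $\lambda(X)=\varepsilon(X)\cdot\alt(X)$; otherwise pick a canonical edge $e(X)$ in $\HH[\sigma(X^{\varepsilon(X)})]$ (say, the edge whose $c$-color is smallest, breaking ties by a fixed linear order on the edges), and set $\lambda(X)=\varepsilon(X)\cdot(a+c(e(X)))$. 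Antipodality holds by construction, and the no-complementary-edges condition reduces to (i) the upper bound $\alt(X)\leq a$ in the first case, and (ii) the fact that two canonical edges assigned to a complementary pair $X\subseteq Y$ would be vertex-disjoint in $\HH$ yet share a $c$-color, contradicting that $c$ properly colors $\KG(\HH)$. Tucker's lemma then yields $a+C\geq n$, i.e.\ the chromatic lower bound $C\geq t$.

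The main new obstacle is to promote this to the zig-zag colorful $K_{\lfloor t/2\rfloor,\lceil t/2\rceil}$. For this I would examine an alternating chain $X_1\subsetneq X_2\subsetneq\cdots\subsetneq X_t$ of sign vectors on which $\lambda$ takes strictly alternating signs and strictly increasing magnitudes, extracted from a tight application of the Tucker argument. The canonical edges $e(X_i)$ living in the top $t$ levels then satisfy, by construction, three properties: their $c$-colors form a strictly increasing sequence; their residence side alternates between $\sigma(X_i^+)$ and $\sigma(X_i^-)$ in lockstep with $\varepsilon(X_i)$; and any two edges on opposite sides are vertex-disjoint in $\HH$. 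These three properties together exhibit a complete bipartite subgraph of $\KG(\HH)$ of the required shape whose color classes interleave in natural order. The delicate step, where the hard combinatorics sits, is verifying that the canonical choices are consistent enough across the entire chain so that \emph{every} cross-side pair of edges --- not just consecutive ones --- is simultaneously disjoint; this is the content that the Simonyi--Tardos proof absorbs into the stronger Ky Fan lemma and which, here, must be unpacked by hand at the Tucker level.
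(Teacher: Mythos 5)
There is a genuine gap, and it sits exactly where you wave your hands. Tucker's lemma, as stated (Lemma~\ref{Tucker}), only asserts that the number of labels is at least $n$; it does \emph{not} produce an alternating chain $X_1\subsetneq\cdots\subsetneq X_t$ on which $\lambda$ alternates in sign with strictly increasing magnitude. Producing such a chain is precisely the content of Ky Fan's lemma, the very tool you set out to avoid, and your phrase ``extracted from a tight application of the Tucker argument'' is not a proof of it. Since your labeling records only a single color $c(e(X))$ per sign vector, one vertex of the Tucker configuration can never yield more than one colored edge, so without the chain your argument stops at the chromatic bound $C\geq t$ and never reaches the bipartite subgraph. (Two smaller points: your labeling is not even well defined in the $\alt$-case, since when exactly one of $\HH[\sigma(X^+)]$, $\HH[\sigma(X^-)]$ is edgeless the side $X^{\varepsilon(X)}$ may contain no edge to serve as $e(X)$; and the step you single out as delicate --- simultaneous disjointness of all cross-side pairs along the chain --- is in fact automatic, because nestedness gives $X_i^{+}\subseteq X_t^{+}$ and $X_j^{-}\subseteq X_t^{-}$ with $X_t^{+}\cap X_t^{-}=\varnothing$. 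The real difficulty is obtaining the chain, not exploiting it.)

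The paper's proof avoids the chain altogether by a different choice of label: for each $X$ it forms the \emph{signed vector of colors} $c(X)=(c(X^+),c(X^-))\in\{-,0,+\}^C$, where $c(X^{\pm})$ is the set of colors of edges contained in $X^{\pm}$, and in the second case sets $|\lambda(X)|=\alt_{\sigma}(\HH)+\alt(c(X))$ (with the obvious shift in the $\salt$ case). A single $X$ with $|\lambda(X)|\geq n$ then satisfies $\alt(c(X))\geq t$, and an alternating subsequence of length $t$ in $c(X)$ directly hands you $t$ edges, alternately contained in $X^+$ and $X^-$, with strictly increasing colors; disjointness of $X^+$ and $X^-$ gives all cross edges at once. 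If you want to salvage your route, either adopt this $\alt(c(X))$-valued labeling, or honestly invoke Ky Fan's lemma and accept that you are no longer working at the Tucker level.
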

\begin{proof}
Let $\sigma_1,\sigma_2:[n]\longrightarrow V(\HH)$ be two bijections
for which we have $\alt(\HH)=\alt_{\sigma_1}(\HH)$ and $\salt(\HH)=\salt_{\sigma_2}(\HH)$.
Now, we shall follow the proof with two different cases $t=n-\alt(\HH)$ and $t=n-\salt(\HH)+1$. 
Assume that $t=n-\alt(\HH)$ (resp. $t=n-\salt(\HH)+1$). 
For simplicity of notation, by identifying the set $V(\HH)$ and $[n]$ via the  
bijection $\sigma_1$ (resp. $\sigma_2$),  we may assume that $V(\HH)=[n]$.  

For each $X=(X^+,X^-)\in\{-,0.+\}^n\setminus\{\zero\}$, 
define $c(X)\isdef(c(X^+),c(X^-))\in \{-,0,+\}^C$ to be a signed vector, where 
$$ c(X^+)\isdef\left\{c(e)\;:\; e\in E(\HH)\; \&\; e\subseteq X^+  \right\}\quad \mbox{ and } 
\quad c(X^-)\isdef\left\{c(e)\;:\; e\in E(\HH)\; \&\; e\subseteq X^- \right\}.$$
For each $X\in \{-,0,+\}^n\setminus\{\zero\}$, define  $\lambda(X)$
as follows. 
\begin{itemize}
\item If $\alt(X)\leq \alt_{\sigma_1}(\HH)$ (resp. $\alt(X)\leq \alt_{\sigma_2}(\HH)$),
then define $\lambda(X)=\pm \alt(X)$, where the sign is positive if the first nonzero term 
of $X$ is positive and is negative otherwise. 

\item If $\alt(X)\geq \alt_{\sigma_1}(\HH)+1$ (resp. $\alt(X)\geq \alt_{\sigma_2}(\HH)+1$),
then define $\lambda(X)=\pm (\alt_{\sigma_1}(\HH)+\alt(c(X)))$ 
(resp. $\lambda(X)=\pm (\salt_{\sigma_2}(\HH)+\alt(c(X))-1)$), 
where the sign is positive if the first nonzero term of $c(X)$ is positive and is 
negative otherwise. 
\end{itemize}
One can simply check that $\lambda$ satisfies the conditions of Lemma~\ref{Tucker}. 
Consequently, there should be an $X\in \{-,0,+\}^n\setminus\{\zero\}$ such that 
$|\lambda(X)|=\lambda(X)\geq n$. 
Clearly, we should have $\alt(X)\geq \alt_{\sigma_1}(\HH)+1$ 
(resp. $\alt(X)\geq \salt_{\sigma_2}(\HH)+1$). Therefore, the definition of $\lambda(X)$
 implies that
$\alt(c(X))\geq n-\alt_{\sigma_1}(\HH)$ (resp. $\alt(c(X))\geq n-\salt_{\sigma_2}(\HH)+1$). 
Let $Z=(Z^+,Z^-)\subseteq c(X)$ be a signed vector such that $\alt(Z)=|Z|=\alt(c(X))=t$. 
Note that if $Z^+\cup Z^-=\{i_1,i_2,\ldots,i_t\}$, where $1\leq i_1<\cdots< i_t\leq C$, 
then we should have $Z^+=\{i_{j}\;:\; \mbox{$j\in[t]$ is odd}\}$ and 
$Z^-=\{i_{j}\;:\; \mbox{$j\in[t]$ is even}\}$. 
For an $j\in[t]$, if $j$ is odd (resp. even), then according to the definition of $c(X)$,  
there is 
an edge  $e\in E(\HH)$ such that $e\subseteq X^+$ (resp. $e\subseteq X^-$) with $c(e)=i_j$. 
Note that the induced subgraph of $\KG(\HH)$ on the vertices  $\{e_1,\ldots,e_t\}$ 
contains the desired complete bipartite graph. 
\end{proof}

Let $n, k$, and $s$ be positive integers, where $n\geq sk$ and $s$ is even. 
It is not difficult to see that if $n$ is large enough (with respect to  $s$ and $k$), 
then any $2$-stable $({s\over 2}(k-1)+1)$-subset of $[n]$ contains 
an $s$-stable $k$-subset of $[n]$. In the following two lemmas, 
we shall prove that $n\geq (s+2)k-2$ would be sufficient for this observation. 
\begin{lemma}\label{case2}
Let  $s$ be an even  positive integer and let $n=2s+2$. If ${\mathcal S}$ is a $2$-stable subset   of $[n]$ of  cardinality $\frac{s}{2}+1$, 
then there are $a,a'\in {\mathcal S}$ such that $a-a'\in \{s,s+1,s+2\}.$
\end{lemma}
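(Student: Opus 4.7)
The plan is to exploit the fact that $n = 2(s+1)$, so there is a natural $2$-to-$1$ quotient map $f\colon \mathbb{Z}/n \longrightarrow \mathbb{Z}/(s+1)$ given by reduction modulo $s+1$, whose fibers are exactly the $s+1$ antipodal pairs $\{i,\,i+s+1\}$ in $[n]$. First I would handle the easy case: if some fiber of $f$ is entirely contained in $\mathcal{S}$, then those two elements of $\mathcal{S}$ differ by exactly $s+1 \in \{s, s+1, s+2\}$, and we are done.

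Otherwise $f$ is injective on $\mathcal{S}$, and $f(\mathcal{S})$ is a subset of $\mathbb{Z}/(s+1)$ of size $\frac{s}{2}+1$. Because $s$ is even, $s+1$ is odd, so the independence number of the cycle $C_{s+1}$ is only $\lfloor (s+1)/2 \rfloor = s/2$. Hence $f(\mathcal{S})$ cannot be an independent set in $C_{s+1}$, and I obtain two elements of $\mathcal{S}$ whose $f$-images are at cyclic distance $1$ in $\mathbb{Z}/(s+1)$.

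To finish, I lift this adjacent pair back to $\mathcal{S}$. Two such elements $a,b \in \mathcal{S}$ satisfy $a - b \equiv \pm 1 \pmod{s+1}$, and the only positive integer values of $|a-b|$ in $[1,\,2s+1]$ with this property are $1,\,s,\,s+2,\,2s+1$. The hypothesis that $\mathcal{S}$ is $2$-stable, i.e.\ $|a-b| \in [2,\,n-2] = [2,\,2s]$, excludes $1$ and $2s+1 = n-1$, leaving $|a-b| \in \{s,\,s+2\} \subseteq \{s, s+1, s+2\}$, as required.

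The main point to spot is the alignment built into the quotient: the target difference set $\{s, s+1, s+2\}$ corresponds to exactly the residues $\{0, \pm 1\}$ modulo $s+1$, while $2$-stability kills precisely the two spurious lifts ($1$ and $n-1$) that would otherwise block the conclusion. Once this is noticed, the parity assumption (that $s$ is even, so $s+1$ is odd) is exactly what makes the independence-number pigeonhole work with no slack, and no further case analysis is needed.
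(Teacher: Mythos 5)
Your proof is correct. Every step checks out: the fibers of reduction mod $s+1$ are the pairs $\{i,i+s+1\}$, a doubled fiber gives difference exactly $s+1$; otherwise $|f(\mathcal{S})|=\tfrac{s}{2}+1$ exceeds the independence number $\lfloor(s+1)/2\rfloor=\tfrac{s}{2}$ of the odd cycle $C_{s+1}$, producing $a,b$ with $a-b\equiv\pm1\pmod{s+1}$; and the only candidates for $|a-b|$ in $[1,2s+1]$ are $1,s,s+2,2s+1$, of which $2$-stability ($2\le|a-b|\le 2s$) eliminates exactly $1$ and $2s+1$.

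The route is genuinely different from the paper's, though the underlying pigeonhole is a close cousin. The paper first normalizes cyclically so that $1\in\mathcal{S}$ and $2s+2\notin\mathcal{S}$ (this is legitimate because the target set $\{s,s+1,s+2\}$ is invariant under $d\mapsto n-d$), dispatches the case $s+1\in\mathcal{S}$ by hand, and then covers the rest of $[n]$ by the $\tfrac{s}{2}$ four-element blocks $B_i=\{2i-1,2i,2i+s,2i+s+1\}$; two $2$-stable elements landing in one block must differ by $s$, $s+1$, or $s+2$. In your language, each $B_i$ is the union of the two fibers over the adjacent residues $2i-1$ and $2i$, so the paper is running the same count after pairing up residues by hand and excising the fiber over $0$. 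What your version buys is symmetry: no choice of basepoint, no separate treatment of $s+1$, and a transparent explanation of where the parity of $s$ enters (it is exactly the oddness of $s+1$ that makes the independence bound tight). What the paper's version buys is that it is entirely elementary and self-contained, needing no reference to independence numbers of cycles.
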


\begin{proof}
Without loss of generality, we may assume that $1\in {\mathcal S}$ and $2s+2\not\in {\mathcal S}$. If $s+1\in {\mathcal S}$, then there is nothing to prove. 
Therefore, let $s+1\not\in {\mathcal S}.$
For  $1\leq i\leq \frac{s}{2}$, define $B_i=\{2i-1, 2i,2i+s,2i+s+1\}.$ Therefore, for some $i$, $1\leq i\leq \frac{s}{2}$, $|B_i\cap {\mathcal S}|=2$.
 Let $a,a'\in B_i\cap {\mathcal S}$, since $\mathcal S$ is $2$-stable,  we have $a-a'\in\{s,s+1,s+2\}$.
\end{proof}

\begin{lemma}\label{stsable}
Let $k$ and $n$ be two positive integers and let $s$ be  an even positive integer, where  $n\geq(s+2)k-2.$ 
If ${\mathcal S}$ is a $2$-stable subset of $[n]$ of  cardinality $\frac{s}{2}(k-1)+1$, then there is an $s$-stable $k$-subset of ${\mathcal S}$.
In particular, $\salt\left([n],{n\choose k}_{s}\right)=s(k-1)+1$
\end{lemma}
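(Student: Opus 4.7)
The plan is to first establish the set-theoretic containment and then derive the bound on $\salt$ (with the matching lower bound following from known chromatic number estimates). The containment is proved by induction on $k$, with $k=1$ trivial. For $k=2$, Lemma~\ref{case2} handles $n=2s+2$ directly; when $n>2s+2$, the span $a_{s/2+1}-a_1$ of $\mathcal{S}=\{a_1<\cdots<a_{s/2+1}\}$ is $\geq s$ by $2$-stability, so either $\{a_1,a_{s/2+1}\}$ is already $s$-stable (its linear distance lies in $[s,n-s]$) or else $\mathcal{S}$ straddles the wrap-around, in which case an analysis of the cyclic partial sums $g_l+\cdots+g_{l+j-1}$ from each starting point $l$ shows that at least one such partial sum lands in the $s$-stable window $[s,n-s]$: otherwise every such walk would be forced to jump from below $s$ to above $n-s$ through a single large gap, and a counting argument using $n\geq 2s+2$ together with $|\mathcal{S}|=s/2+1$ yields a contradiction.

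For the inductive step at level $k\geq 3$, I split on the maximum cyclic gap $g^\ast$ of $\mathcal{S}$. If $g^\ast\geq s$, I cut the cycle at this gap and apply a linear greedy: since any length-$s$ window of a $2$-stable set contains at most $s/2$ elements, a greedy that successively picks the nearest element at linear distance $\geq s$ extracts exactly $k$ elements (the budget $|\mathcal{S}|=\frac{s}{2}(k-1)+1$ matches precisely), and the wrap-around of the resulting cyclic $k$-subset is $\geq g^\ast\geq s$. If $g^\ast<s$, all gaps lie in $[2,s-1]$, and one must produce a cyclic partition of $(g_1,\ldots,g_m)$ into $k$ consecutive nonempty groups each of total weight $\geq s$; here the numerical hypothesis $n\geq(s+2)k-2$ is essential, and an averaging argument over the $m$ rotations of the partial-sum walk produces a starting point from which greedy delivers exactly $k$ full groups of weight in $[s,2s-2]$.

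For the identity, the upper bound $\salt\leq s(k-1)+1$ follows via $\sigma=\id$: given $X\in\{-,0,+\}^n$ with $\alt(X)\geq s(k-1)+2$, extract an alternating subsequence of that length. Since $s$ is even, this length is even, so the first and last entries of the subsequence have opposite signs; the positive-indexed sub-subsequence thus has size $\frac{s}{2}(k-1)+1$, its smallest coordinate is $\geq 1$, and its largest is $\leq n-1$ (since it is immediately followed in the alternating pattern by a negative entry at an index $\leq n$). Hence this sub-subsequence is cyclically $2$-stable in $[n]$, and the containment statement yields an $s$-stable $k$-subset inside $X^+$, with the symmetric conclusion for $X^-$. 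The matching lower bound $\salt\geq s(k-1)+1$ is obtained from Theorem~\ref{alihaji} together with the known value $\chi(\KG_s(n,k))=n-s(k-1)$. The main obstacle is the case $g^\ast<s$ of the induction: producing a cyclic partition with essentially no slack in the gap structure requires the averaging/rotation argument, and is where both the tightness of the threshold $n\geq(s+2)k-2$ and the exact size $|\mathcal{S}|=\frac{s}{2}(k-1)+1$ come to bear.
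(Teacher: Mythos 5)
Your reformulation of the containment as a statement about the cyclic gap sequence of $\mathcal S$ is sound, and two of your three pieces hold up: the case $g^\ast\geq s$ is correct (cut at the long gap, run the linear greedy, and use the fact that a window of $s$ consecutive integers contains at most $s/2$ elements of a $2$-stable set, so the budget $\frac{s}{2}(k-1)+1$ yields $k$ picks while the cut gap supplies the cyclic condition), and your derivation of $\salt\leq s(k-1)+1$ from the containment, together with the lower bound via Theorem~\ref{alihaji} and $\chi(\KG_s(n,k))=n-s(k-1)$, is a legitimate and non-circular way to finish the equality. The genuine gap is the case $g^\ast<s$, which is precisely where the difficulty of the lemma is concentrated. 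This case is non-vacuous (for $s=4$ it is empty, since all gaps $\leq 3$ would force $n\leq 3(2k-1)<(s+2)k-2$, but already for $s=6$, $k=3$ one can have $7$ gaps in $[2,5]$ summing to any admissible $n$), and your treatment of it fails on two counts. First, the stated target of the averaging --- ``exactly $k$ full groups of weight in $[s,2s-2]$'' covering the cycle --- is unachievable whenever $n>k(2s-2)$: for $s=6$, $k=3$ and all gaps equal to $5$ one has $n=35>30=k(2s-2)$, yet the configuration does contain a $6$-stable $3$-subset. What you actually need is only a starting point from which $k-1$ greedy groups have total weight at most $n-s$. Second, and more seriously, averaging over the $m$ starting points does not deliver even this weaker statement: counting, for each gap $g_q$, the at most $s/2$ starting points whose single greedy step traverses it gives $\sum_p d\bigl(p,f(p)\bigr)\leq\frac{s}{2}n$, so the average total advance of the $(k-1)$-step walk is bounded only by about $n\cdot\frac{s(k-1)}{s(k-1)+2}=n\bigl(1-\frac{2}{s(k-1)+2}\bigr)$, which is at most $n-s$ only when $n\geq\frac{s}{2}\bigl(s(k-1)+2\bigr)$ --- a threshold of order $s^2k/2$, far above the hypothesis $n\geq(s+2)k-2$. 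So the rotation/averaging device cannot close this case at the stated threshold, and no substitute argument is given.

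For contrast, the paper's case split is different and is designed so that the ``dense'' configurations never have to be attacked head-on: its good case is that every element of $\mathcal S$ has a successor at cyclic distance in $\{s,s+1,s+2\}$, where the greedy closes up exactly because $(k-1)(s+2)=n-s$ when $n=(s+2)k-2$; in the remaining case it deletes a window of $s+2$ consecutive integers around an element lacking such a successor, applies the induction hypothesis with $k-1$ to the truncated instance, and uses Lemma~\ref{case2} to repair the interface between the deleted window and the surviving $s$-stable $(k-1)$-subsets. Your $g^\ast<s$ branch needs an argument of comparable substance; as written it is an assertion rather than a proof, so the proposal is incomplete.
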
\label{casek}
\begin{proof} { 
First note that for given $k$ and $s$, if the statement is true for some $n\geq k(s+2)-2$, then it is true for all integers $n'\geq n.$ Therefore 
it is enough to prove the lemma for $n= k(s+2)-2.$ 

We use induction on $k$ to prove the lemma. The validity of lemma when $k=1$ is trivial and when $k=2$ it was shown  in Lemma~\ref{case2}.
Thus, we may assume that $k\geq 3.$ 

If for each $i\in {\mathcal S}$, we have  
$\{i+s,i+s+1, i+s+2\}\cap{\mathcal S}\neq\varnothing$ (where addition is modulo $n$), 
then we can greedily find  an $s$-stable $k$-subset, 
and there is nothing to prove.
Otherwise, without loss of generality, assume that $n-s-1\in{\mathcal S}$ and $n-1,n,1\not\in{\mathcal S}.$

Set $ A_{_{n-s-1}}=\{n-s-1,n-s,\cdots,n\}$.
Note that since $n-1,n\not\in S$, we have $|A_{_{n-s-1}}\cap {\mathcal S}|=\frac{s}{2}-\beta$, for some $0\leq\beta\leq \frac{s}{2}.$ 
Now, consider $[n]\setminus A_{_{n-s-1}}$ and ${\mathcal S}\setminus A_{_{n-s-1}}$.
Set ${\dot n}= n-(s+2)$ and ${\dot{\mathcal S}}={\mathcal S}\setminus A_{_{n-s-1}}.$
Note that   $[\dot n]$ and $[n]\setminus A_{_{n-s-1}}$ are equal 
and since $1\not\in {\mathcal S}$,   ${\dot{\mathcal S}}$ is  a $2$-stable subset of  $[\dot n]$ 
of cardinality $(k-2)\frac{s}{2}+\beta+1$.

 Define the $s$-subset $B$ of $[\dot n]$ by
  $$B\isdef\{n-2s-1,n-2s,\cdots,n-s-2\}.$$
By induction, we may consider the following two cases:\\
(i) If there is an $s$-stable $(k-1)$-subset of ${\dot{\mathcal S}}$, say $\dot D$, which has no element of $B$, 
then it is readily verified that $D={\dot D}\cup\{n-s-1\}$ is an $s$-stable $k$-subset of $[n]$,
completing the proof in this case.\\
(ii) If there are at least $\beta+1$ $s$-stable $(k-1)$-subsets of ${\dot{\mathcal S}}$, say ${\dot D}_1,{\dot D}_2,\cdots,{\dot D}_{\beta+1}$,
such that each   ${\dot D}_i$ has exactly one distinct  element  of $B$, say $b_i$.

Now, consider the $2$-stable subset $\{b_1,b_2,\cdots,b_{\beta+1}\}\cup({\mathcal S}\cap A_{_{n-s-1}})$, by Lemma~\ref{case2}, there exist  two elements $\cev{b},a$
such that $a-\cev{b}\in \{s,s+1,s+2\}.$ 
Since $n-1,n\not\in {\mathcal S}$, both $\cev{b},a$ are not in $A_{_{n-s-1}}$. Hence, we may assume that 
$a\in A_{_{n-s-1}}$ and $\cev{b}=b_i$ for some $i$, $1\leq i\leq\beta+1$. Let ${ \vec b}$ be the  smallest element of ${\dot D}_i$.
Since ${\dot D}_i$ is an $s$-stable $(k-1)$-subset of $[\dot n]$,  therefore we have 
$s\leq\cev{b}-\vec{b}\leq {\dot n}-s=n-(2s+2)$. On the other hand, $s\leq a-\cev{b}\leq s+2$.
Therefore, $2s\leq a-\vec{b}\leq n-s$. Therefore,  ${\dot D}_i\cup\{a\}$ is an $s$-stable $k$-subset of $[n]$ as diseried.

Note that for an $X\in\{-,0,+\}^n\setminus\{\zero\}$ with $\alt(X)\geq s(k-1)+2$, both $X^+$ and $X^-$
contain $2$-stable subsets of size at least ${s\over 2}(k-1)+1$, which implies that
both $X^+$ and $X^-$
contain $s$-stable subsets of size at least $k$.
This concludes 
that $\salt\left([n],{n\choose k}_{s}\right)=s(k-1)+1$
}\end{proof}
We remind the reader that Meunier~\cite{MR2793613} showed that $\KG_s(n,k)$ has a proper
coloring with $n-s(k-1)$ colors.  Note that if we set $\HH=([n],{[n]\choose k}_s)$, 
then $\KG(\HH)=\KG_s(n,k)$. Clearly, using these observations,  Lemma~\ref{stsable}, and Theorem~\ref{lower}, 
we have the next theorem. 
\begin{theorem}\label{colorfulstable}
Let $k$ and $n$ be two positive integers and let $s$ be  an even positive integer, where  $n\geq(s+2)k-2.$ 
Any proper coloring of $\KG_s(n,k)$ contains a complete bipartite subgraph 
$K_{\left\lfloor {t\over 2}\right\rfloor,\left\lceil {t\over 2}\right\rceil}$, where $t=n-s(k-1)$ such that
all vertices of this subgraph receive different colors and  these different 
colors occur alternating on the two parts 
of the bipartite graph with respect to their natural order. In particular, we have
$\chi(\KG_s(n,k))=n-s(k-1)$. 
\end{theorem}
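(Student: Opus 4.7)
The plan is to apply Theorem~\ref{lower} to the hypergraph $\HH=([n],{[n]\choose k}_s)$, since by construction $\KG(\HH)=\KG_s(n,k)$. The only input Theorem~\ref{lower} needs is a lower bound on $t=\max\{|V(\HH)|-\alt(\HH),\,|V(\HH)|-\salt(\HH)+1\}$, and this comes straight from Lemma~\ref{stsable}: under the standing hypotheses $n\geq (s+2)k-2$ and $s$ even, that lemma gives $\salt(\HH)=s(k-1)+1$, so
\[
|V(\HH)|-\salt(\HH)+1 \;=\; n-s(k-1) \;=\; t.
\]

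With this value of $t$ in hand, Theorem~\ref{lower} hands over, for any proper coloring of $\KG_s(n,k)$, a complete bipartite subgraph $K_{\lfloor t/2\rfloor,\lceil t/2\rceil}$ whose $t$ vertices carry pairwise distinct colors that alternate between the two parts in the natural order of the palette. That is exactly the structural conclusion of the theorem.

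The chromatic-number statement is then a two-line consequence. The lower bound $\chi(\KG_s(n,k))\geq t=n-s(k-1)$ is immediate from the $t$ pairwise distinct colors exhibited above, while the matching upper bound is Meunier's explicit $(n-s(k-1))$-coloring, quoted just before the theorem; together they force equality.

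The main obstacle has already been discharged inside Lemma~\ref{stsable}: the nontrivial packing statement that every $2$-stable subset of $[n]$ of size $\tfrac{s}{2}(k-1)+1$ contains an $s$-stable $k$-subset whenever $n\geq (s+2)k-2$. Once that containment is in place, the translation into $\salt(\HH)=s(k-1)+1$ and the invocation of Theorem~\ref{lower} are routine. The only cosmetic point worth checking is that in $t=\max\{n-\alt(\HH),\,n-\salt(\HH)+1\}$ we are genuinely entitled to use the $\salt$-term, which is automatic since Lemma~\ref{stsable} already supplies $n-\salt(\HH)+1=n-s(k-1)$ and the maximum is at least this quantity.
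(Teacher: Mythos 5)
Your proposal is correct and follows exactly the route the paper takes: set $\HH=([n],{[n]\choose k}_s)$ so that $\KG(\HH)=\KG_s(n,k)$, feed $\salt(\HH)=s(k-1)+1$ from Lemma~\ref{stsable} into Theorem~\ref{lower} to obtain the colorful $K_{\lfloor t/2\rfloor,\lceil t/2\rceil}$ with $t=n-s(k-1)$, and close with Meunier's $(n-s(k-1))$-coloring for the upper bound. No discrepancies to report.
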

Let $r$ be a positive integer. For an $r$-coloring $c$ of a given graph $G$, 
a cycle $C=v_1,v_2,\ldots,v_m,v_1$ is called {\it tight} if for each $i\in [m]$, 
we have $c(v_{i+1})=c(v_i)+1(\mod r)$.
It is known~\cite{MR1815614} that $\chi_c(G)=r$ if and only if the graph 
$G$ is $r$-colorable and every $r$-coloring of $G$ contains a tight cycle. 
In view of this result, to prove the next theorem, it suffices to show that any 
proper $(n-s(k-1))$-coloring of $\KG_s(n,k)$ contains a tight cycle. 
\begin{theorem}
Let $n,k$, and $s$  be a  positive integers, where $n$ and $s$ are even and $n\geq(s+2)k-2.$ Then, we have
$$\chi_c(\KG_s(n,k))=n-s(k-1).$$
\end{theorem}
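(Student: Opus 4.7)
The plan is to invoke the tight cycle characterization of the circular chromatic number quoted just above the theorem: for $r=\chi(G)$, one has $\chi_c(G)=r$ if and only if $G$ admits a proper $r$-coloring and every such proper $r$-coloring contains a tight cycle. Theorem~\ref{colorfulstable} combined with Meunier's $(n-s(k-1))$-coloring already gives $\chi(\KG_s(n,k))=n-s(k-1)$, so the task reduces to exhibiting a tight cycle inside every proper $t$-coloring of $\KG_s(n,k)$, where $t\isdef n-s(k-1)$. Note that $t$ is even, since $n$ is even by hypothesis and $s(k-1)$ is even because $s$ is even.

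Next, I would fix an arbitrary proper $t$-coloring $c$ of $\KG_s(n,k)$ and apply Theorem~\ref{colorfulstable} to it. This produces a complete bipartite subgraph $K_{t/2,t/2}$ whose $t$ vertices carry pairwise distinct colors and such that, in the natural order on the color set, consecutive colors lie on opposite sides of the bipartition. Because $c$ uses only $t$ colors in total, the $t$ colors appearing on this bipartite subgraph are exactly all of them; after relabeling them as $1,2,\dots,t$ in increasing order, the odd-indexed colors occupy one side $A$ of the bipartition and the even-indexed colors occupy the other side $B$.

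For each $i\in[t]$ let $u_i$ denote the vertex of the bipartite subgraph of color $i$. Consecutive indices $i$ and $i+1$ lie on opposite sides of the bipartition, so $u_iu_{i+1}$ is an edge of $\KG_s(n,k)$ by completeness of the bipartite subgraph; the closing edge $u_tu_1$ likewise exists because $t$ even forces $u_t\in B$ and $u_1\in A$. Hence $u_1,u_2,\dots,u_t,u_1$ is a cycle whose color sequence $1,2,\dots,t,1$ increases by $1$ modulo $t$ at each step, i.e., a tight cycle with respect to $c$. The main obstacle is concentrated in Theorem~\ref{colorfulstable}, whose proof supplies the alternating colorful complete bipartite subgraph; once that structure is in hand the construction of the tight cycle is essentially forced, and the only place the evenness hypotheses re-enter in this final step is in guaranteeing that $t$ is even so that the closing edge $u_tu_1$ respects the bipartition.
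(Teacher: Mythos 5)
Your proposal is correct and follows exactly the paper's argument: invoke the tight-cycle characterization of $\chi_c$, apply Theorem~\ref{colorfulstable} to an arbitrary proper $(n-s(k-1))$-coloring to obtain the alternating colorful $K_{t/2,t/2}$, and read off a tight cycle. The paper compresses the final step to ``clearly, this subgraph contains a tight cycle,'' whereas you spell out the cycle $u_1,\dots,u_t,u_1$ and the role of $t$ being even for the closing edge --- same proof, just more explicit.
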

\begin{proof}
In view of the former discussion, to prove the assertion,
let $c$ be a proper $(n-s(k-1))$-coloring of $\KG_s(n,k)$. Consider the complete bipartite 
subgraph $K_{{n-s(k-1)\over 2},{n-s(k-1)\over 2}}$ 
of $\KG_s(n,k)$, whose existence is ensured by Theorem~\ref{colorfulstable}.  
Clearly, this subgraph contains a tight cycle, which completes the proof. 
\end{proof}

Original Lov\'{a}sz's proof~\cite{MR514625}  of Kneser's conjecture is rather long and 
complicated.  B{\'a}r{\'a}ny~\cite{MR514626}, using Gale's lemma~\cite{MR0085552}, 
presented a short proof of this result. 
For $n>2k$, Gale~\cite{MR0085552} proved that 
the set $[n]$ can be identified with a subset of $S^{n-2k}$ in such a way that any open 
hemisphere contains at least one $k$-subset of $[n]$ (a vertex of $\KG(n,k)$).
Schrijver~\cite{MR512648} generalized 
Gale's lemma to $2$-stable $k$-subsets of $[n]$. He also used this generalization 
to prove that $\chi\left(\SG(n,k)\right)=n-2k+2$. For an interesting proof of 
Gale's lemma, see~\cite{MR1829620}. Moreover, the first presented author 
and Hajiabolhassan~\cite{2016arXiv160708780A} generalized Glale's lemma. 
For any hypergraph $\HH=(V,E)$, 
they introduce a lower bound for the maximum possible value of $m$ for which  
there is a subset $X$ of $S^m$ and a suitable identification of $V$ with $X$  
such that any open hemisphere of $S^m$ contains an edge of $\HH$. The next lemma can be obtained 
directly  from this result. However, for the sake of completeness, we prove it here with a 
different approach. 
\begin{lemma}\label{galenew}
Let $k$ and $n$ be two positive integers and let $s$ be  an even positive integer, where  $n\geq(s+2)k-2.$ There exists an $n$-subset $X$ of  $S^{n-s(k-1)-2}$ and a suitable identification between 
$X$ and $[n]$ such that every open hemisphere of $S^{n-s(k-1)-2}$ contains an $s$-stable $k$-subset of $[n]$.
\end{lemma}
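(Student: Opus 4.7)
The plan is to derive Lemma~\ref{galenew} as a direct consequence of Schrijver's $2$-stable strengthening of Gale's lemma~\cite{MR512648} together with Lemma~\ref{stsable}, which precisely bridges $2$-stability and $s$-stability.

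First, I would set $k'\isdef \frac{s}{2}(k-1)+1$; this is a positive integer since $s$ is even, and the hypothesis $n\geq (s+2)k-2$ translates to $n\geq 2k' = s(k-1)+2$ after a one-line arithmetic check (the difference is $2k+s-4\geq 0$). Consequently, Schrijver's strengthening of Gale's lemma, applied with parameter $k'$ in place of $k$, produces $n$ points $v_1,\ldots,v_n\in S^{n-2k'}$ and a suitable identification between $\{v_1,\ldots,v_n\}$ and $[n]$ such that every open hemisphere of $S^{n-2k'}$ contains a $2$-stable $k'$-subset of $[n]$. The crucial numerical point is that $n-2k'=n-s(k-1)-2$, matching the target sphere dimension.

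Second, for any open hemisphere $H$ of $S^{n-s(k-1)-2}$, let $\mathcal{S}\subseteq [n]$ be the $2$-stable $k'$-subset inside $H$ produced by the previous step. Since $|\mathcal{S}|=k'=\frac{s}{2}(k-1)+1$ and the assumption $n\geq (s+2)k-2$ is exactly the hypothesis of Lemma~\ref{stsable}, that lemma yields an $s$-stable $k$-subset of $\mathcal{S}$, which a fortiori lies in $H$.

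No serious obstacle arises: the argument reduces to two parameter checks followed by composition of two previously established results. The point worth emphasising is that the quantity $k'=\frac{s}{2}(k-1)+1$ plays a double role---it is simultaneously the cardinality threshold of Lemma~\ref{stsable} forcing $2$-stability to upgrade to $s$-stability, and exactly the $k'$ for which Schrijver's Gale-type arrangement lives on the sphere $S^{n-s(k-1)-2}$ demanded by Lemma~\ref{galenew}. This coincidence is not accidental; it reflects the identity $\salt([n],{[n]\choose k}_{s})=s(k-1)+1$ proved inside Lemma~\ref{stsable}, which (via the general Gale-type framework of~\cite{2016arXiv160708780A}) already predicts the target dimension of the sphere.
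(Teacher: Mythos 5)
Your proposal is correct and follows essentially the same route as the paper: set $p=\tfrac{s}{2}(k-1)+1$, invoke Schrijver's $2$-stable strengthening of Gale's lemma on $S^{n-2p}=S^{n-s(k-1)-2}$, and then upgrade the $2$-stable $p$-subset in each open hemisphere to an $s$-stable $k$-subset via Lemma~\ref{stsable}. The only difference is that you make the parameter checks ($n\geq 2p$ and the dimension match) explicit, which the paper leaves implicit.
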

\begin{proof}
Set $p={s\over 2}(k-1)+1$. 
In view of the generalization of Gale's lemma by Schrijver~\cite{MR512648}, there exists an 
$n$-subset $X$ of $S^{n-2p}$ and an identification of $X$ with $\{1,\ldots, n\}$ 
such that  any open hemisphere of $S^{n-2p}$ contains a $2$-stable $p$-subset of 
$\{1,\ldots, n\}$. Now, by Lemma \ref{stsable}, any $2$-stable $p$-subset contains an 
$s$-stable $k$-subset. This implies that any open hemisphere of $S^{n-s(k-1)-2}$ contains an 
$s$-stable $k$-subset of $[n]$ as desired. 
\end{proof}
For a graph $G$,  there are several topological spaces (simplicial complexes) 
associated with  $G$. In terms of some topological invariants of these spaces, 
there are several lower bounds for the chromatic number, 
see~\cite{MR514625,SiTaZs13,MR2279672,MR2351519}.  
For instance, the box-complex of $G$, denoted 
$B_0(G)$, is one of these spaces.   
Simonyi and  Tardos~\cite{MR2351519}, using Tucker-Bacon~lemma (Lemma~\ref{Bacon}), proved that if the chromatic 
number of $G$ equals to ${\rm coind}(B_0(G))+1$,  
then for any optimal coloring of $G$ with colors $[C]$ and for 
any partition $L\uplus M$ of $[C]$, there is a multi-colored complete bipartite subgraph 
$K_{|L|,|M|}$ of $G$ such that all colors in $L$ 
are assigned to the vertices of one side of $K_{|L|,|M|}$ and all colors in $M$ are 
assigned to the vertices of the other side. 
We should mentioned that there are some other similar statement, which are stated 
in terms of some other topological parameters related to the graph $G$. 
These kinds of results are known as 
$K_{l,m}$ type theorems, see~\cite{SiTaZs13,MR2351519}. 

\begin{lemma}{\rm (Tucker-Bacon lemma.)}~\label{Bacon}
Let $U_1,U_2,\ldots,U_{d+2}$ be open subsets of
the $d$-sphere $S^d$ such that for any $1\leq i\leq d+2$,
$U_i\cap -U_i=\varnothing$ and also, $U_1\cup\cdots\cup U_{d+2}=S^d$.
Then for any partition $A\cup B=\{1,2,\ldots,d+2\}$ for which $A\neq\varnothing$ and
$B\neq \varnothing$, there is an $x\in S^d$ such that $x\in \ds\bigcap_{i\in A}U_i$ and
$-x\in \ds\bigcap_{j\in B}U_j$.
\end{lemma}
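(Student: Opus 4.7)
The Tucker--Bacon lemma is a classical topological refinement of the Borsuk--Ulam theorem, and my plan is to derive it from Tucker's lemma (Lemma~\ref{Tucker}) by converting the open-cover hypothesis into an odd continuous map and then into a combinatorial labeling of a fine antipodally symmetric subdivision of $S^d$.

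First, I would replace each $U_i$ by the continuous distance function $f_i(x)=d(x,S^d\setminus U_i)$, which is strictly positive exactly on $U_i$. The antipodal-free hypothesis $U_i\cap -U_i=\varnothing$ forces $f_i(x)f_i(-x)=0$ for every $x$, so $g_i(x)=f_i(x)-f_i(-x)$ is continuous and odd, with $g_i(x)>0$ iff $x\in U_i$ and $g_i(x)<0$ iff $-x\in U_i$. The covering hypothesis $\bigcup_i U_i=S^d$ then makes $g=(g_1,\ldots,g_{d+2})\colon S^d\to\R^{d+2}\setminus\{\zero\}$ nowhere zero. Given the partition $A\sqcup B=\{1,\ldots,d+2\}$, flipping the signs of the $B$-coordinates produces an odd, nowhere-zero map $\tilde g$, and the conclusion becomes equivalent to the existence of some $x\in S^d$ with every coordinate of $\tilde g(x)$ strictly positive.

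Next, assume for contradiction that no such $x$ exists; then at every $v\in S^d$ the sign pattern of $\tilde g(v)$ has at least one non-positive entry. Fix an antipodally symmetric simplicial subdivision $\T$ of $S^d$ identified with $\{-,0,+\}^{d+1}\setminus\{\zero\}$ in the standard way, with mesh fine enough that no coordinate of $\tilde g$ changes sign across a single simplex. At each vertex $v$, let $j(v)$ be the smallest index with $\tilde g_{j(v)}(v)\ne 0$ and set $\lambda(v)=j(v)\cdot \operatorname{sign}\tilde g_{j(v)}(v)$, refined by a carefully engineered tie-breaking rule designed to cap the range of $|\lambda|$. Antipodality $\lambda(-v)=-\lambda(v)$ is immediate from the oddness of $\tilde g$. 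If $|\lambda|$ can be confined to $d$ or fewer values, Tucker's lemma delivers the contradiction $d\ge d+1$, forcing the required $x$ to exist.

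The main obstacle is the combined combinatorial design of the tie-breaking rule so that $\lambda$ simultaneously has no complementary edges and uses few enough label magnitudes to violate Tucker's bound. The absence of complementary edges follows from the mesh of $\T$ being finer than the oscillation scale of each $\tilde g_i$ across any edge, so that the witness index $j(v)$ varies continuously between adjacent vertices and cannot flip sign across an edge. Restricting the range of $|\lambda|$ uses the contradiction hypothesis itself, which forbids certain sign patterns and lets the labeling steer clear of one distinguished coordinate coming from the partition $(A,B)$. This last combinatorial bookkeeping is the delicate core of the argument, and once it is in place Tucker's lemma closes the loop and produces the desired $x$.
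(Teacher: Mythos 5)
The paper does not actually prove Lemma~\ref{Bacon}; it is quoted as a known result (used by Simonyi and Tardos), so your attempt has to stand on its own. Your first paragraph is correct and is the standard reduction: the functions $g_i(x)=f_i(x)-f_i(-x)$ are odd, satisfy $g_i(x)>0$ iff $x\in U_i$ and $g_i(x)<0$ iff $-x\in U_i$, and the covering hypothesis makes $(g_1,\ldots,g_{d+2})$ nowhere zero; finding $x$ with all coordinates of $\tilde g(x)$ positive is indeed equivalent to the conclusion. The problem is everything after that. First, the hypothesis you impose on the triangulation --- ``mesh fine enough that no coordinate of $\tilde g$ changes sign across a single simplex'' --- is unachievable: each $\tilde g_i$ is a continuous odd function taking both signs, so it vanishes somewhere, and near a zero every simplex of every triangulation sees a sign change; only a bound on the oscillation is available. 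Second, and decisively, the step you defer as ``delicate bookkeeping'' is the entire proof. The natural labeling by the first nonzero coordinate of $\tilde g$ uses $d+2$ label magnitudes on a triangulation of $S^d$ (so $n=d+1$ in Lemma~\ref{Tucker}), and Tucker's lemma only objects once you are down to $m\leq d$; your sketch of ``steering clear of one distinguished coordinate'' drops at most one label, leaving you at $d+1$, which contradicts nothing. You would need to exploit simultaneously the covering hypothesis (every $g(x)$ has a positive and a negative coordinate) and the contradiction hypothesis (no all-positive $\tilde g(x)$) to eliminate two labels while preserving antipodality and the no-complementary-edge property, and no mechanism for this is given. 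Third, the form of Tucker's lemma stated in the paper lives on the fixed complex $\{-,0,+\}^{d+1}\setminus\{\zero\}$; identifying an arbitrarily fine antipodal triangulation with it is not possible, so you would in any case need the general simplicial version of Tucker's (or Fan's) lemma, which is not what Lemma~\ref{Tucker} provides.

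A route that does close is to go through the Borsuk--Ulam theorem (no odd map $S^d\to S^{d-1}$) rather than a bespoke labeling. Normalize the $f_i$ into a partition of unity $p_i=f_i/\sum_j f_j$ and set $\mu(x)=(p_1(x),\ldots,p_{d+2}(x))$; since $U_i\cap -U_i=\varnothing$, the vectors $\mu(x)$ and $\mu(-x)$ have disjoint supports, so $\nu(x)=\mu(x)-\mu(-x)$ is an odd, nowhere-zero map into the hyperplane $H=\{z:\sum z_i=0\}\cong\R^{d+1}$, with $\nu_i(x)>0$ iff $x\in U_i$ and $\nu_i(x)<0$ iff $-x\in U_i$. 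If the conclusion failed for the partition $(A,B)$, then $\nu/\|\nu\|$ would avoid the antipodal pair $\pm c/\|c\|$ for any fixed $c\in H$ with $c_i>0$ for $i\in A$ and $c_j<0$ for $j\in B$ (such $c$ exists exactly because $A$ and $B$ are both nonempty); composing with the equivariant retraction of the unit sphere of $H$ minus two antipodal points onto the equator orthogonal to $c$ yields an odd map $S^d\to S^{d-1}$, a contradiction. This keeps the correct analytic setup you already have and replaces the missing combinatorial core with a two-line topological argument.
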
 

In what follows, similar to Simonyi and  Tardos's proof, 
we use Tucker-Bacon lemma to prove a  $K_{l,m}$ type theorem 
for $s$-stable Kneser graphs provided that $s$ is even. 
\begin{theorem}
Let $n,k$, and $s$ be positive integers, where $s$ is even and $n\geq(s+2)k-2.$ 
Also, let $c$ be a proper coloring of $\KG_s(n,k)$ with colors $\{1,2,\ldots,n-s(k-1)\}$ 
and assume that $A$ and $B$ form a partition of $\{1,2,\ldots,n-s(k-1)\}$ . 
Then there exists a complete bipartite subgraph $K_{l,m}$ of $\KG_s(n,k)$
with parts $L$ and $M$ such that $|L|=l$, $|M|=m$ and the vertices in $L$ and $M$ 
receive different colors from $A$ and $B$, respectively.
\end{theorem}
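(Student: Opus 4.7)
The plan is to mimic the Simonyi--Tardos argument for Schrijver graphs, replacing the classical Schrijver--Gale embedding with the stronger embedding produced by Lemma~\ref{galenew}. Set $d = n-s(k-1)-2$, and identify $[n]$ with a subset $X = \{x_1,\ldots,x_n\}$ of $S^d$ as in Lemma~\ref{galenew}, so that every open hemisphere of $S^d$ contains an $s$-stable $k$-subset of $[n]$. For any point $x \in S^d$, write $H(x) = \{y \in S^d : \langle y, x\rangle > 0\}$ for the open hemisphere centered at $x$, and abuse notation by writing $F \subseteq H(x)$ whenever the points of $X$ indexed by $F \subseteq [n]$ all lie in $H(x)$.

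For each color $i \in \{1,\ldots,n-s(k-1)\}$, define
\[
U_i \isdef \bigl\{x \in S^d : \text{there exists an } s\text{-stable } k\text{-subset } F \subseteq [n] \text{ with } F \subseteq H(x) \text{ and } c(F)=i\bigr\}.
\]
Each $U_i$ is open because the condition $x_j \in H(x)$ is an open condition in $x$ for each $j$. Each $U_i$ satisfies $U_i \cap (-U_i) = \varnothing$: otherwise there would exist $s$-stable $k$-subsets $F \subseteq H(x)$ and $F' \subseteq H(-x)$ both of color $i$, and since $H(x) \cap H(-x) = \varnothing$ we would have $F \cap F' = \varnothing$, giving an edge of $\KG_s(n,k)$ whose endpoints share the color $i$, contradicting properness. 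Finally, $\bigcup_i U_i = S^d$ by Lemma~\ref{galenew}. Since the number of sets is $n-s(k-1) = d+2$, we are exactly in the hypotheses of Lemma~\ref{Bacon}.

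Applying Lemma~\ref{Bacon} to the partition $A \cup B$ yields a point $x \in S^d$ with $x \in \bigcap_{i \in A} U_i$ and $-x \in \bigcap_{j \in B} U_j$. For each $i \in A$ choose an $s$-stable $k$-subset $F_i$ with $F_i \subseteq H(x)$ and $c(F_i) = i$; for each $j \in B$ choose $G_j \subseteq H(-x)$ with $c(G_j) = j$. Since $H(x)$ and $H(-x)$ are disjoint, every $F_i$ is disjoint from every $G_j$, so $L = \{F_i : i \in A\}$ and $M = \{G_j : j \in B\}$ span a complete bipartite subgraph of $\KG_s(n,k)$. By construction the vertices of $L$ receive precisely the colors in $A$ (each exactly once) and those of $M$ receive precisely the colors in $B$, which is the desired $K_{|A|,|B|}$.

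The genuinely new content was packaged into Lemma~\ref{galenew} and its combinatorial core, Lemma~\ref{stsable}, which carries the sharp threshold $n \geq (s+2)k-2$ needed to turn $2$-stable covers of hemispheres into $s$-stable ones. Once that geometric input is available, the Tucker--Bacon step above is essentially parallel to the Simonyi--Tardos proof in the $s=2$ case, and the main delicate point is just the careful verification that $U_i \cap (-U_i) = \varnothing$, which uses properness of the coloring through the disjointness of antipodal open hemispheres.
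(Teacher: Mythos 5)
Your proposal is correct and follows essentially the same route as the paper: both use Lemma~\ref{galenew} to embed $[n]$ in $S^{n-s(k-1)-2}$, define the open sets $U_i$ from the colors appearing in open hemispheres, and apply the Tucker--Bacon lemma to the partition $A\cup B$. Your write-up just spells out the openness and antipodal-freeness checks in slightly more detail than the paper does.
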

\begin{proof}   The idea of our proof is similar  to the B{\'a}r{\'a}ny's proof of Kneser conjecture~\cite{MR514626}.
By Lemma~\ref{galenew}, we can identify $[n]$ with a subset of $S^{n-s(k-1)-2}$ 
such that every open hemisphere of $S^{n-s(k-1)-2}$ contains an $s$-stable 
$k$-subset of $[n]$. 
For $1\leq i\leq {n-s(k-1)}$, define
 $$U_i\isdef\left\{x\in S^{n-s(k-1)-2}: H(x)\mbox{ contanis  a vertex with color } i \right\}.$$ 
 One can see that each $U_i$ is an open set, 
$U_1,U_2,\ldots,U_{n-s(k-1)}$ covers $S^{n-s(k-1)-2}$ and
also none of them contains a pair of antipodal points. Thus,
Tucker-Bacon~lemma implies that there is an $x\in S^{n-s(k-1)-2}$ such that
$x\in \ds\bigcap_{i\in A}U_i$ and $-x\in \ds\bigcap_{j\in B}U_j$. Therefore, 
in view of the definition of $U_i$'s, for each $i\in A$ (resp. $j\in B$), there is an $s$-stable 
$k$-subset $L_i$ (resp. $M_j$) of $[n]$ such that 
$c(L_i)=i$ and $L_i\subseteq H(x)$ (resp. $c(M_j)=j$ and $M_j\subseteq H(-x)$).
Note that since $H(x)\cap H(-x)=\varnothing$, for each $i\in A$ and $j\in B$, $L_i$ is 
adjacent to $M_j$ in $\KG_s(n,k)$, which completes the proof. 
\end{proof}


\section{Circular Coloring of Induced subgraphs of Kneser Graphs}
The concept of  free coloring  of graphs was introduced in \cite{MR2601263} by the first author and Hajiabolhassan as a tool for studying the circular chromatic number of graphs. 
Indeed,  they proved that if the free chromatic number  of a graph $G$ is at least twice its chromatic number, then $\chi(G)=\chi_c(G)$. 

An independent set in a graph $G$
is called a {\it free independent
set} if it can be extended to at least two distinct
maximal independent sets in $G$.
Clearly, one can see that an independent set $F$ in $G$ is a free
independent set
if and only if there exists an edge $uv\in E(G)$ such that
$(N(u)\cup N(v))\cap F=\varnothing$. The maximum possible size of a free independent set in $G$ is denoted by $\bar{\alpha}(G)$. 
Furthermore, a vertex of a graph $G$ is
contained in a free independent set if and only if the graph
obtained by deleting  the closed neighborhood of this vertex has at least one edge
(for more details,~see~\cite{MR2601263}).
As a natural extension of the chromatic number, we can define the free chromatic number of graphs as follows. 
\begin{definition}{
The {\it free chromatic number} of a graph $G$, denote $\phi(G)$, is the minimum size
of a partition of $V(G)$ into free independent sets. If $G$ does not have such a partition, then we set $\phi(G)=\infty$.}
\end{definition}
The next lemma has a key role in the rest of the paper.
\begin{lemma}{\rm \cite{MR2601263}}\label{freecircular}
Let $G$ be a graph such that $\chi_c(G)=\frac nd$ with ${\rm gcd}(n,d)=1$.
If $d\geq2$, or equivalently, if $\chi_c(G)\neq\chi(G)$, then
$\phi(G)\leq 2\chi(G)-1$.
\end{lemma}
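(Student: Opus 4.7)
The plan is to use an optimal circular coloring of $G$ to manufacture a partition of $V(G)$ into $2\chi(G)-1$ free independent sets. Set $k = \chi(G)$ and fix a homomorphism $c \colon G \to K_{n/d}$ realizing $\chi_c(G)$. Since $d \geq 2$, we have $n/d < k$ and hence $n \leq kd - 1$.

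My first step would be to partition the color wheel $\Z_n$ into $2k-1$ consecutive arcs $A_1,\ldots,A_{2k-1}$, each of length at most $d-1$; this is feasible because
\[
(2k-1)(d-1) - (kd-1) = (k-1)(d-2) \geq 0.
\]
Setting $F_j := c^{-1}(A_j)$ automatically makes $F_j$ an independent set of $G$, since two colors in an arc of length at most $d-1$ lie at circular distance at most $d-2$ and are thus non-adjacent in $K_{n/d}$.

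The heart of the argument is to verify that every $F_j$ is in fact \emph{free}. Writing $A_j = \{a,a+1,\ldots,a+\ell-1\} \pmod{n}$ with $\ell \leq d-1$, I would look at the colors $p = a+\ell-d$ and $q = a+\ell$ (taken mod $n$): they are at circular distance exactly $d$, both lie outside $A_j$, and each is within circular distance $d-1$ of every color in $A_j$. Consequently, no vertex carrying color $p$ or $q$ can be adjacent in $G$ to any vertex of $F_j$. Hence, as soon as one actual edge $u_jv_j \in E(G)$ with $c(u_j)=p$ and $c(v_j)=q$ is produced, it satisfies $\bigl(N(u_j)\cup N(v_j)\bigr)\cap F_j = \varnothing$ and witnesses freeness.

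The main obstacle is producing such a witness edge for every arc $A_j$, and this is where the hypotheses $\gcd(n,d)=1$ and $d\geq 2$ become essential. I would rely on the standard rigidity property of optimal circular colorings in lowest terms: every color of $\Z_n$ is used by $c$, and for every $i \in \Z_n$ there is a \emph{tight edge} of $G$ joining a vertex of color $i$ with a vertex of color $i+d \pmod{n}$; otherwise one could either delete an unused color or shift a contiguous block of colors to obtain a strictly better circular coloring, contradicting the minimality of $n/d$. Applying this rigidity with $i = a+\ell-d$ for each $j$ provides the required edge, and the resulting partition establishes $\phi(G) \leq 2k - 1$.
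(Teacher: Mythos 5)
The paper does not prove this lemma: it is imported verbatim from \cite{MR2601263}, so there is no internal proof to compare against. Your argument is correct and is essentially the argument of that source: cut $\Z_n$ into $2\chi(G)-1$ arcs of length at most $d-1$ (feasible exactly because $(2k-1)(d-1)\geq kd-1\geq n$), note that each preimage is independent, and certify freeness of $c^{-1}(A_j)$ by a tight edge whose two colors flank $A_j$ at circular distance exactly $d$. The one ingredient you treat informally is the rigidity claim that for every $i$ there is an edge from a vertex of color $i$ to one of color $i+d\pmod n$; this is indeed standard (by Guichard's theorem, any optimal $(n,d)$-coloring with $\gcd(n,d)=1$ admits a directed cycle in the tight-edge digraph, and since $\gcd(n,d)=1$ that cycle steps through all $n$ colors), so it is legitimately citable from \cite{MR1815614}, though your one-line ``shift a block of colors'' justification would need as much work as the lemma itself to make rigorous. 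Two small points worth recording explicitly: $d\geq 2$ forces $\chi(G)\geq 3$ and hence $n>2d$, which is what guarantees that $p=a+\ell-d$ and $q=a+\ell$ lie outside $A_j$, are adjacent to each other in $K_{n\over d}$, and are non-adjacent to every color of $A_j$; and any arc with empty preimage should simply be dropped from the partition.
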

The preceding lemma provides a sufficient condition for the
equality of the chromatic number and the circular chromatic number of
a graph $G$.

Let $G$ be a graph with at least one free independent set. 
It is easily seen that $\phi(G)\geq \frac {|V(G)|}{\bar{\alpha}(G)}$. 
It was proved by Hilton and Milner \cite{MR0219428} that if $T$
is an independent set of $\KG(n,k)$ of size at least 
$${n-1 \choose k-1}-{n-k-1 \choose k-1}+2,$$
then
$$\displaystyle \bigcap_{A \in T} A = \{i\},$$
for some $i \in [n]$. 
By using this result of Hilton and Milner, it  was proved in~\cite{MR1983360} that if 
$n\geq 2k^2(k-1)$, then $\chi_c(\KG(n,k)) =\chi(\KG(n,k))$. 
This result was improved in~\cite{MR2601263} by proving that we have 
$\chi_c(\KG(n,k)) =\chi(\KG(n,k))$ for $n\geq 2k^2(k-1)-2k+3$. 
It was also showed in~\cite{MR1983360} that there is a threshold $n(k)$ such that for $n\geq n(k)$, we have 
$\chi_c(\SG(n,k)) =\chi(\SG(n,k))$. This gave a positive answer to a question of Lih and 
Liu~\cite{Ko-We-2002}. Lih and Liu~\cite{Ko-We-2002} also posed the question of what 
the smallest value  of $n(k)$ is. They proved that $n(k)\geq 2k+2$. 
One should note that in~\cite{MR1983360} only the existence of the 
threshold $n(k)$ is ensured and the authors did~not present any upper bound for it.

Using Hilton and Milner's result, one can simply see that, for $n>2k$, 
the size of any free independent
 set in the Kneser graph $\KG(n,k)$ is at most
${n-1 \choose k-1}-{n-k-1 \choose k-1}\leq k{n-2 \choose k-2}$, 
see~\cite{MR2601263}. In view of this observation,  
we generalize the result
by Hajiabolhassan and Zhu~\cite{MR2601263} to the following theorem. 

\begin{theorem}
Let $n$ and $k$ be two positive integers, where $n\geq 2k^2(k-1)$. 
Let $H$  be an induced subgraph of $\KG(n,k)$ with at least $\frac{2k^2(k-1)}{n}{n\choose k}$ vertices.
 Then $H$ has the same  chromatic number and circular chromatic number.
\end{theorem}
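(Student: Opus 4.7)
The plan is to invoke the contrapositive of Lemma~\ref{freecircular}: it suffices to show that $\phi(H)\geq 2\chi(H)$, where $\phi(H)$ is the free chromatic number of $H$. For this I will bound $\chi(H)$ from above via the ambient graph, and $\bar{\alpha}(H)$ from above via Hilton--Milner, then use the trivial inequality $\phi(H)\geq |V(H)|/\bar{\alpha}(H)$.

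First I would pin down $\chi(H)$. Since $H$ is an induced subgraph of $\KG(n,k)$, we have $\chi(H)\leq\chi(\KG(n,k))=n-2k+2$.

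Next, the crucial observation is that $\bar{\alpha}(H)\leq\bar{\alpha}(\KG(n,k))$, and this is where the induced hypothesis is used. If $F\subseteq V(H)$ is a free independent set in $H$, then by definition there is an edge $uv\in E(H)$ such that $F\cap(N_H(u)\cup N_H(v))=\varnothing$. Since $H$ is induced in $\KG(n,k)$, the vertex set of $H$ meets $N_{\KG(n,k)}(u)$ only inside $N_H(u)$; in particular $F$ (which lies in $V(H)$) avoids $N_{\KG(n,k)}(u)\cup N_{\KG(n,k)}(v)$, and $uv$ is still an edge of $\KG(n,k)$. Thus $F$ is a free independent set in $\KG(n,k)$, and the bound recalled from Hilton--Milner gives
$$\bar{\alpha}(H)\leq\bar{\alpha}(\KG(n,k))\leq\binom{n-1}{k-1}-\binom{n-k-1}{k-1}\leq k\binom{n-2}{k-2}.$$

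Now I combine. Any partition of $V(H)$ into free independent sets uses at least $|V(H)|/\bar{\alpha}(H)$ parts (and if no such partition exists, $\phi(H)=\infty$ and we are done immediately), so using the size hypothesis $|V(H)|\geq \frac{2k^2(k-1)}{n}\binom{n}{k}$ together with $\binom{n}{k}/\binom{n-2}{k-2}=n(n-1)/(k(k-1))$ yields
$$\phi(H)\geq\frac{|V(H)|}{\bar{\alpha}(H)}\geq\frac{\frac{2k^2(k-1)}{n}\binom{n}{k}}{k\binom{n-2}{k-2}}=\frac{2k(k-1)}{n}\cdot\frac{n(n-1)}{k(k-1)}=2(n-1).$$
Comparing with $2\chi(H)\leq 2(n-2k+2)$, we see that $\phi(H)\geq 2\chi(H)$ reduces to $2(n-1)\geq 2(n-2k+2)$, i.e.\ $k\geq 3/2$, which holds in the regime of interest ($k\geq 2$). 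By Lemma~\ref{freecircular}, $\chi_c(H)=\chi(H)$.

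The only genuinely subtle step is the transfer in the second paragraph: verifying that freeness of an independent set is preserved when passing from $H$ to $\KG(n,k)$. Once that is in place, the hypothesis $|V(H)|\geq\frac{2k^2(k-1)}{n}\binom{n}{k}$ is calibrated exactly so that the ratio $|V(H)|/\bar{\alpha}(H)$ matches the $2(n-1)$ threshold needed to beat $2\chi(\KG(n,k))$, so no further delicate estimate is required.
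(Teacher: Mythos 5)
Your proposal is correct and follows essentially the same route as the paper: reduce to $\phi(H)\geq 2\chi(H)$ via Lemma~\ref{freecircular}, bound $\phi(H)\geq |V(H)|/\bar{\alpha}(H)\geq |V(H)|/\bar{\alpha}(\KG(n,k))\geq 2(n-1)$ using the Hilton--Milner bound $\bar{\alpha}(\KG(n,k))\leq k\binom{n-2}{k-2}$, and compare with $2\chi(H)\leq 2(n-2k+2)$ for $k\geq 2$. The only difference is cosmetic: you explicitly justify the monotonicity $\bar{\alpha}(H)\leq\bar{\alpha}(\KG(n,k))$ for induced subgraphs, which the paper uses without comment.
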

\begin{proof}{
Obviously, the assertion holds for $k=1$. So, let $k\geq 2$.
Assume that $H$ is an induced subgraph of $\KG(n,k)$ with at least $\frac{2k^2(k-1)}{n}{n\choose k}$ vertices.
According to Lemma \ref{freecircular}, it is enough to show that 
 $\phi(H)\geq 2\chi(H)$. To this end, note that 
$$\begin{array}{rll}
  \phi(H) & \geq & {|V(H)|\over \bar{\alpha}(H)}\\ \\
          & \geq & {|V(H)|\over \bar{\alpha}(\KG(n,k))}\\ \\
          & \geq & { \frac{2k^2(k-1)}{n}{n\choose k}\over k{n-2 \choose k-2}}\\ \\
          & \geq & { 2k^2(k-1)n(n-1)\over nk^2(k-1)},
\end{array}$$
therefore $\phi(H)\geq 2n-2>2\chi(\KG(n,k))\geq2\chi(H)$ as desired.
}\end{proof}


In the rest of this section, we focus on the chromatic number and the circular chromatic number of the $s$-stable Kneser graph
$\KG_s(n,k)$, which is also an induced  subgraph of $\KG(n,k)$. 
As a special case of previous theorem, we introduce a threshold 
$n(k,s)$ such that for any $n\geq n(k,s)$, we have $\chi(\KG_s(n,k))=\chi_c(\KG_s(n,k))$.
In this regard, we first need to count the number of vertices of $\KG_s(n,k)$.

Let $N_i$ be the number of vertices of $\KG_s(n,k)$ containing $i$.
It is obvious that $N_i=N_j$ for all $i,j\in[n]$.
Also, let $A=\{x_1,\ldots,x_k\}$ be a vertex of $\KG_s(n,k)$,
where $1=x_1<x_2<\cdots<x_k\leq n$.
Define $y_i= x_{i+1}-x_{i}$ for all $1\leq i\leq k-1$ and $y_k= n-x_k+1$.
Since $A\in V(\KG_s(n,k))$ and $1\in A$, we have $y_i\geq s$ for all $i\in[k]$.
Also, since $y_1+y_2+\cdots+y_k=n$, any vertex $A$ of $\KG_s(n,k)$ with $1\in A$ leads us to a solution of the following system;
$$\left\{\begin{array}{l}
Z_1  + Z_2  +\cdots+  Z_k=n\\
Z_i\geq s\ \ \  {\rm for\ each\ } i\in[n]
 \end{array}\right.$$
and vise versa. Note that the number of the solutions of the preceding system is ${n-k(s-1)-1\choose k-1}$. Consequently, for each $i\in[n]$, we have 
$N_i=N_1={n-k(s-1)-1\choose k-1}$ for all $i\in [n]$.
By an easy double counting, one can see that
$$|V(\KG_s(n,k))|={1\over k}\displaystyle\sum_{i=1}^n N_i={n\over k}{n-k(s-1)-1\choose k-1}.$$

\begin{theorem}
If $n\geq 2k^2(k-1)+(s-1)k(k-1)+1$, then $\chi_c(\KG_s(n,k))=\chi(\KG_s(n,k))$.
\end{theorem}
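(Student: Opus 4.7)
\emph{Proof plan.} Since $\KG_s(n,k)$ is an induced subgraph of $\KG(n,k)$, my plan is to invoke the previous theorem with $H = \KG_s(n,k)$. The hypothesis $n \geq 2k^2(k-1)$ is automatic from the stated bound on $n$, so the only task is to verify the vertex-count condition
$$|V(\KG_s(n,k))| \geq \frac{2k^2(k-1)}{n}\binom{n}{k}.$$
Substituting the formula $|V(\KG_s(n,k))| = \frac{n}{k}\binom{n-(s-1)k-1}{k-1}$ displayed just before the theorem and using $\binom{n}{k} = \frac{n}{k}\binom{n-1}{k-1}$, this reduces to
$$\binom{n-(s-1)k-1}{k-1} \geq \frac{2k^2(k-1)}{n}\binom{n-1}{k-1}.$$

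To match the stated threshold exactly, I would iterate Pascal's identity rather than bound a product of ratios: the telescoping identity
$$\binom{n-1}{k-1} - \binom{n-(s-1)k-1}{k-1} = \sum_{i=0}^{(s-1)k-1}\binom{n-2-i}{k-2} \;\leq\; (s-1)k\binom{n-2}{k-2},$$
combined with the standard relation $\binom{n-2}{k-2}/\binom{n-1}{k-1} = (k-1)/(n-1)$, turns the desired inequality into the scalar inequality
$$1 - \frac{(s-1)k(k-1)}{n-1} \;\geq\; \frac{2k^2(k-1)}{n}.$$

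Clearing denominators recasts the last inequality as the quadratic $n^2 - Bn + 2k^2(k-1) \geq 0$ in which $B = 1 + (s-1)k(k-1) + 2k^2(k-1)$ is precisely the threshold $n_0$ of the theorem. At $n = B$ the left-hand side equals $2k^2(k-1) > 0$, and the derivative $2n - B$ is positive for $n \geq B$, so the quadratic remains non-negative throughout the hypothesized range, which finishes the verification; the previous theorem then yields $\chi_c(\KG_s(n,k)) = \chi(\KG_s(n,k))$. The main delicacy is the tightness of the estimate: a naive Bernoulli bound on the product $\prod_{j=1}^{k-1}(1 - (s-1)k/(n-j))$ loses an additive factor of roughly $k-1$ in the threshold and just misses the stated $n_0$, whereas the Pascal telescoping above is sharp enough in the relevant regime.
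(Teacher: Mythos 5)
Your proof is correct, and its engine is the same as the paper's: you use the identical vertex count $|V(\KG_s(n,k))|=\frac{n}{k}\binom{n-(s-1)k-1}{k-1}$ and the identical estimate $\binom{n-1}{k-1}-\binom{n-(s-1)k-1}{k-1}\leq (s-1)k\binom{n-2}{k-2}$ (the paper obtains it by counting the $(k-1)$-subsets of $[n-1]$ that meet $[(s-1)k]$; your Pascal telescoping yields the same bound). The only real difference is the routing. You apply the preceding theorem as a black box and verify its hypothesis $|V(H)|\geq\frac{2k^2(k-1)}{n}\binom{n}{k}$, which after clearing denominators becomes the quadratic $n^2-Bn+2k^2(k-1)\geq 0$. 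The paper instead bypasses that hypothesis and bounds $\phi(\KG_s(n,k))\geq |V(\KG_s(n,k))|/\bar{\alpha}(\KG_s(n,k))\geq\frac{n}{k^2(k-1)}\left(n-1-(s-1)k(k-1)\right)$ directly, so the requirement $\phi\geq 2n\geq 2\chi$ reduces to the linear condition $n\geq 2k^2(k-1)+(s-1)k(k-1)+1$ with nothing left to check. Both routes land on exactly the stated threshold; your extra observation that the quadratic is still nonnegative at $n=B$ (with $2k^2(k-1)$ to spare) and increasing beyond it is the small additional verification your packaging requires, and it goes through.
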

\begin{proof}
Let $X$ be the number of $(k-1)$-subsets $B$ of the set $[n-1]$ such that $B\cap [(s-1)k]\neq \varnothing$, i.e., 
$$X=\#\left\{B\;:\; B\subseteq [n-1]\; \&\; B\cap [(s-1)k]\neq\varnothing\right\}.$$
Obviously, we have ${n-1\choose k-1}={n-(s-1)k-1\choose k-1}+X$. 
On the other hand, one can check that 
$X\leq (s-1)k{n-2 \choose k-2}$, which implies the following inequalities; 
$$\begin{array}{rll}
|V(\KG_s(n,k))| &  =   & \ds {n\over k}{n-k(s-1)-1\choose k-1} \\ \\
                      & \geq &  \ds{n\over k}\left({n-1\over k-1}-(s-1)k\right){n-2 \choose k-2}\\ \\
                      & \geq &  \ds{n\over k(k-1)}\left(n-1-(s-1)k(k-1)\right){n-2 \choose k-2}.
  \end{array}
$$
Clearly, the previous inequalities lead us to the following ones;
$$\begin{array}{rll}
\phi(\KG_s(n,k)) & \geq & {|V(\KG_s(n,k))|\over \bar{\alpha}(\KG_s(n,k))}\\ \\
                       & \geq & {{n\over k(k-1)}\left(n-1-(s-1)k(k-1)\right){n-2 \choose k-2}\over k{n-2 \choose k-2}}\\ \\
                       & \geq & {n\over k^2(k-1)}\left(n-1-(s-1)k(k-1)\right)
\end{array}.$$
Consequently, we have 
$\phi(\KG_s(n,k))\geq 2n \geq 2(n-s(k-1))$ provided that $n\geq 2k^2(k-1)+(s-1)k(k-1)+1.$
Considering Lemma \ref{freecircular}, the proof is completed. 
\end{proof}

Note that for $s=2$, previous theorem gives an upper bound for the smallest value of the  threshold $n(k)$, giving a partial answer to the question posed by Lih and 
Liu~\cite{Ko-We-2002}.

\section*{Acknowledgment} We would like to acknowledge Professor Hossein Hajiabolhassan for his invaluable comments and suggestions.

\bibliographystyle{plain}

\def\cprime{$'$} \def\cprime{$'$}

\end{document}